\newtheorem{theorem}{Theorem}[section]
\newtheorem{lemma}[theorem]{Lemma}
\numberwithin{equation}{section}
\newcommand{\beq}{\begin{equation}}
\newcommand{\eeq}{\end{equation}}
\newcommand{\Rmnum}[1]{\expandafter\@slowromancap\romannumeral #1@}
\newcommand{\ben}{\begin{eqnarray}}
\newcommand{\een}{\end{eqnarray}}
\newcommand{\beno}{\begin{eqnarray*}}
\newcommand{\eeno}{\end{eqnarray*}}
\begin{document}

\title[Global Regularity to Incompressible Viscoelastic ]
{Global Regularity to Incompressible Viscoelastic System With a Class of Large Initial Data}

\author[ Y. Zhu ]{ Yi Zhu}

\address{Department of Mathematics, East China University of Science and Technology, Shanghai 200237,  P.R. China}

\email{zhuyim@ecust.edu.cn}

\date{}
\subjclass[2010]{76A10, 76D03, 35B65}
\keywords{Incompressible viscoelastic flows, Global regularity, Large data}

\begin{abstract}
The global existence of solutions to incompressible viscoelastic flows has been a longstanding open problem, even for the global weak solution.
Under some special structure (``div-curl'' condition) the global small smooth solution was obtained in \cite{llzhou, cz}.
However, the result with large initial data remains unknown up to now, and it is studied in this paper.
We shall put forward a new structure: the cone-condition and then derive the global smooth solution to 3D incompressible viscoelastic system with a class of large initial data. The key is to gain an angle quantity in the estimate of nonlinear terms by restricting the solution to a cone.
\end{abstract}

\maketitle

\section{introduction}

\subsection{Background and Main Result}

The flow of three-dimensional (3D) incompressible viscoelastic system can be described by the following system
\begin{equation}\label{e1.1}
\left\{
  \begin{array}{ll}
    u_t +u\cdot\nabla u - \mu \Delta u +\nabla P = \nabla \cdot \Big[ \frac{\partial W(F)}{\partial F}F^T \Big], &  x \in \mathbb{R}^3,t>0, \\
    F_t +u\cdot\nabla F = \nabla uF,\\
    \nabla\cdot u=0 ,
  \end{array}
\right.
\end{equation}
where $u(t,x) \in \mathbb{R}^3$  represents the velocity field of materials, $P(t,x)$ means the scalar pressure. $F(t,x) \in \mathcal{M}$ denotes the deformation tensor ($\mathcal{M}$ is the set of $3\times 3$ matrices) and $W(F)$ is the elastic energy functional. The constant $\mu>0$ is the coefficient of viscosity.
In the first equation, $\frac{\partial W(F)}{\partial F}$ is the Piola-Kirchhoff stress tensor and $\frac{\partial W(F)}{\partial F}F^T$ is the Cauchy-Green tensor.
Throughout this paper we will adopt the following notations

$$ [\nabla u]_{ij}= \nabla_j u_i, \qquad [\nabla\cdot F]_i = \sum_j \nabla_j F_{ij}. $$

We consider here the Cauchy problem of system \eqref{e1.1}  with Hookean elasticity (namely $W(F)= |F|^2$) and the initial data will be specified by
\begin{equation}\label{initial}
(u,F)|_{t=0} = (u_0,F_0)(x) \qquad for\;all \quad x\in \mathbb{R}^3.
\end{equation}
Moreover the initial velocity filed satisfies the divergence free condition,
\begin{equation}\label{divergence}
 \nabla\cdot u_0 = 0,
\end{equation}
and the initial deformation tensor satisfies the natural constraint,
\begin{equation}\label{det}
  \det F_0 =1 .
\end{equation}

In this paper, we shall construct a family of global smooth solutions to the 3D incompressible viscoelastic system above with large initial data.

In the last several decades, there have been many attempts to capture different phenomena of the incompressible viscoelastic systems.
We mention some works here as a short review.
The global existence of a small smooth solution was established by Guillop\'{e} and Saut \cite{GS}, in which an additional linear damping term was presented in the equation of $F$.
In \cite{LM1} Lions and Masmoudi constructed global weak solutions for some Oldroyd-B type models.
And in \cite{CM1}, Chemin and Masmoudi proved the existence and uniqueness of local and global solutions in the critical Besov space.

If the damping term is absent, the velocity viscosity alone may not be sufficient to guarantee the regularity.
The study of near equilibrium dynamics of this system is both relevant and very important.
Here we consider the case deformation gradient $F$ is near an identity matrix and let $E=F-I$ be the perturbation. Then it's natural to derive the system for $(u,E)$ from \eqref{e1.1}.
Notice that the system satisfies the law of Hookean elasticity, there is

\begin{equation}\label{new}
\left\{
  \begin{array}{ll}
    u_t +u\cdot\nabla u - \mu \Delta u +\nabla P = \nabla \cdot (EE^T) + \nabla\cdot E + \nabla \cdot E^T,   x \in \mathbb{R}^3,t>0,\\
    E_t +u\cdot\nabla E = \nabla uE + \nabla u,\\
    \nabla\cdot u=0 .
  \end{array}
\right.
\end{equation}
Under the ``div'' structure assumption of initial data, i.e.,
\begin{equation}\label{div}
\nabla \cdot E_0^T =0,
\end{equation}
using basic energy analysis, one can find the divergence part $\nabla\cdot E$ is good enough while the curl part $\nabla\times E$ behaves roughly which brings the main difficulty.
In 2D case, Lin, Liu and Zhang \cite{llz} studied an auxiliary vector field and proved the global existence of small solutions under \eqref{div} (we refer to \cite{llzhou2} for different approach).
While in 3D case, other more structure is needed to handle the wildest part $\nabla\times E$.
In \cite{llzhou}, Lei, Liu and Zhou found a so-called ``curl'' structure of initial data,
\begin{equation}\label{curl}
\nabla_k (E_0)_{ij}-\nabla_j (E_0)_{ik}=(E_0)_{lj} \nabla_l (E_0)_{ik}-(E_0)_{lk}\nabla_l (E_0)_{ij},
\end{equation}
which is physical and compatible with the system.
Under both ``div'' structure and ``curl'' structure, they regarded $\nabla\times E$ as higher order terms, and then proved  the results for  global small solutions in 2D and 3D.
Chen and Zhang \cite{cz} picked another ``curl-free'' structure  $\nabla \times (F_0^{-1}-I)=0 $ in proving the Cauchy problem.
Under the same structure assumption,  the initial-boundary value results were done by Lin and Zhang \cite{lz}.
The result of critical $L^p$ framework was given by Zhang and Fang \cite{lp}. Feng, Zhu and Zi \cite{fzz} studied the blow up criterion. We also refer to \cite{fz, lzhou} for incompressible limit theory.
Recently, the author \cite{jfa} proved the global existence of small solutions to \eqref{e1.1} without any physical structure in 3D.
We  refer to \cite{fgo, hns, mt, old5, old6, old7, old2, old3, old4} for results considering the related Oldroyd-B models.
And refer to \cite{zhang, xuli, hu, Tan, com1, com2, hu3, yong, Gen, jjw, PX, PXZ, Zhu-Visco} for results about compressible viscoelastic systems.

All the above works are concerning with the small initial data problems, while for the large data problem the known results are few.
In \cite{hulin}, Hu and Lin studied the general data problem of 2D incompressible viscoelastic flows and obtained a class of global weak solution. However, the global existence of general weak solutions for incompressible viscoelastic system still remains a longstanding open problem even in two spatial dimensions.

On the other hand, if we focus on the global existence of smooth solutions to this system, to the best of our knowledge, the smallness of initial data is necessary.
In other words, there is few result considering the classical large data problem for system \eqref{e1.1} and it is studied in this paper.

The  purpose of this paper is to construct a new type of large solutions for the 3D incompressible viscoelastic system. The method in deriving the large solutions may also be effective for other incompressible fluids.

Before we state our main theorem, we want to introduce the so-called ``cone-structure'' which was first put forward by the author \cite{zy} to construct the large solution for 3D incompressible Navier-Stokes equations.
Firstly, let's define the cone $\Omega_{\theta_0,\xi_0}$ in $\mathbb{R}^3$ as follows:
\begin{equation}\nonumber
\Omega_{\theta_0,\xi_0} \doteq  \big\{   \xi \big|  \xi \in \mathbb{R}^3, \arg \langle \xi, \xi_0 \rangle \; \leq \theta_0 \big\}.
\end{equation}
Here notation $\arg \langle \xi, \xi_0 \rangle$ denotes the included angle (which is normalized to a real number in $[0, \pi)$) of vector $\xi$ and vector $\xi_0$.
Now, we can give the main result in the following theorem.

\begin{theorem}\label{t1.2}
Consider the Cauchy problem of 3D incompressible viscoelastic system \eqref{e1.1}-\eqref{initial} with Hookean elasticity. The smooth initial data $u_0, E_0$ satisfies \eqref{divergence}, \eqref{det} and ``div-curl'' condition \eqref{div}, \eqref{curl}.
Furthermore, we assume that the initial data satisfies  the cone-condition, i.e.,
\begin{equation}\label{cone-condition}
supp \; \hat{u}_0 , supp \; \hat{E}_0 \subseteq \Omega_{\theta_0,\xi_0},
\end{equation}
for some vector $\xi_0$ and positive $\theta_0 \leq \frac{\pi}{2}$. Here $\hat{f}$ denotes the Fourier transform of function $f$.  If there exists a small number $c_0$ (only depending on $\mu$) and holds that,
\begin{equation}\label{small}
\theta_0 \big(  \|u_0\|_{H^2} + \|E_0\|_{H^2} \big) \leq c_0,
\end{equation}
then the system  admits a unique global smooth solution.
\end{theorem}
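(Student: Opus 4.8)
The plan is to run, for the ``div-curl'' structure of the initial data, the same energy scheme that produces global \emph{small} solutions in \cite{llzhou}, but to let the cone-condition do the work normally done by smallness: once the solution is confined to $\Omega_{\theta_0,\xi_0}$, every genuinely nonlinear term in the perturbed system \eqref{new} gains a factor of order $\theta_0$, so that \eqref{small} plays the role of the usual smallness of $\|u_0\|_{H^2}+\|E_0\|_{H^2}$. The first task is to show that the Fourier supports of $u(t,\cdot)$ and $E(t,\cdot)$ remain inside $\Omega_{\theta_0,\xi_0}$ for all $t>0$. This is purely geometric: since $\theta_0\le\frac\pi2$, the cone is a closed convex cone, hence closed under vector addition, so on the Fourier side a product of cone-supported functions is again cone-supported (the output frequency is a sum of input frequencies), while the heat semigroup $e^{\mu t\Delta}$, the Leray projection, and the Riesz-type operators defining the pressure are Fourier multipliers that do not enlarge supports; passing this through the iteration used for local well-posedness of \eqref{new} gives invariance of $\Omega_{\theta_0,\xi_0}$. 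In parallel I would invoke from \cite{llzhou} that \eqref{div} and \eqref{curl} are propagated by the flow, so $\nabla\times E$ may be treated as higher order and, in frequency, one has $k\cdot\hat u(t,k)=0$ (from \eqref{divergence}) and $k\cdot\hat E_{\cdot i}(t,k)=0$ (from \eqref{div}), $\hat E_{\cdot i}$ being the $i$-th column of $\hat E$.

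The heart of the matter is the angle gain, which rests on one elementary fact: if $v\perp\beta$ with $\beta\in\Omega_{\theta_0,\xi_0}$ and $\alpha\in\Omega_{\theta_0,\xi_0}$, then the angle between $\alpha$ and $\beta$ is at most $2\theta_0$, so $|v\cdot\alpha|\le\sin(2\theta_0)\,|v|\,|\alpha|$ when $\theta_0\le\frac\pi4$; the complementary regime $\theta_0\in(\frac\pi4,\frac\pi2]$ forces the data to be small by \eqref{small} and is handled directly by \cite{llzhou}, so one may assume $\theta_0$ small and read this as a gain $\lesssim\theta_0$. Writing the quadratic terms of \eqref{new} in Fourier variables, the transport terms $u\cdot\nabla u$ and $u\cdot\nabla E$ both involve the contraction $\hat u(\xi-k)\cdot k$ with $\hat u(\xi-k)\perp(\xi-k)$ and $\xi-k,\,k\in\Omega_{\theta_0,\xi_0}$, hence gain $\theta_0$; the terms $\nabla uE$ and $\nabla\cdot(EE^T)$ reduce, after one use of \eqref{div}, to contractions of type $(\xi-k)\cdot\hat E_{\cdot j}(k)$ with $\hat E_{\cdot j}(k)\perp k$, which likewise gain $\theta_0$, while the Leray projection and pressure contribute only bounded multipliers that do not spoil this. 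Distributing up to two derivatives in the $H^2$ estimate does not destroy the structure, since differentiation merely multiplies Fourier transforms by monomials in the (cone-confined) frequencies and leaves the orthogonality relations intact.

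With the modified energy $\mathcal E\sim\|u\|_{H^2}^2+\|E\|_{H^2}^2$ of \cite{llzhou} (including the small cross term whose coercivity, once \eqref{div}, \eqref{curl} are used, supplies the dissipation $\mathcal D\sim\|\nabla u\|_{H^2}^2+\|\nabla E\|_{H^2}^2$ absent from the $E$-equation), carrying out the $H^2$ estimate via Plancherel together with the angle gain of the previous paragraph yields a schematic inequality $\frac{d}{dt}\mathcal E(t)+\mathcal D(t)\lesssim\theta_0\sqrt{\mathcal E(t)}\,\mathcal D(t)$, the linear coupling between the two equations being absorbed as in \cite{llzhou}. A continuity argument then closes: starting from a local smooth solution and bootstrapping $\mathcal E(t)\le 4\big(\|u_0\|_{H^2}^2+\|E_0\|_{H^2}^2\big)$, the right-hand side becomes $\lesssim\theta_0\big(\|u_0\|_{H^2}+\|E_0\|_{H^2}\big)\,\mathcal D(t)\le Cc_0\,\mathcal D(t)$, absorbed by $\mathcal D$ for $c_0$ small depending only on $\mu$; hence $\mathcal E$ is nonincreasing, the bootstrap closes, the solution is global, higher Sobolev norms propagate by Gronwall using $\int_0^\infty\mathcal D\,dt<\infty$, and uniqueness follows from a routine estimate on the difference of two solutions.

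\textbf{Main difficulty.} The delicate point is to perform the $H^2$ (and higher) energy estimates so that the Fourier-side orthogonality stays visible — tracking, term by term and through every integration by parts, which vector is perpendicular to which frequency — instead of invoking black-box product or commutator estimates, which would discard the gain $\theta_0$. One must also check that this angle gain is compatible with the Lei--Liu--Zhou mechanism that demotes $\nabla\times E$ to a higher-order term, since the dissipation for $E$ is not present in the equations but extracted from the structure; reconciling the cone gain with that bookkeeping inside a single closed energy inequality is where the real work lies.
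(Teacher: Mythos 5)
Your proposal matches the paper's strategy essentially line for line: propagate the cone-condition (closedness under frequency addition for $\theta_0\le\frac\pi2$) together with the div--curl structure, then exploit the divergence-free constraints on the Fourier side by splitting the frequency into components parallel and orthogonal to the support direction so that every quadratic term gains a factor $\sin\theta_0\lesssim\theta_0$, and close a Lei--Liu--Zhou type $H^2$ energy estimate with a continuity argument under $\theta_0(\|u_0\|_{H^2}+\|E_0\|_{H^2})\le c_0$. The one imprecision is your schematic dissipation $\mathcal D\sim\|\nabla u\|_{H^2}^2+\|\nabla E\|_{H^2}^2$: the cross-term mechanism only yields $\|\nabla\cdot E\|_{H^1}^2$ (converted to $\|\nabla E\|_{H^1}^2$ via the curl identity), one derivative lower than the $u$-dissipation, but the paper shows this weaker control is still sufficient to absorb all the nonlinear contributions, so this does not break the argument.
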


\subsection{Large Smooth Solution}

In this subsection, we show that the smooth solution constructed in Theorem 1.1 can be arbitrarily large.

If the Sobolev norm of initial data $\|u_0\|_{H^2}$ and $\|E_0\|_{H^2}$ is small enough, it immediately satisfies the assumption \eqref{small}.
However, the smallness of $\|u_0\|_{H^2}$ and $\|E_0\|_{H^2}$ is not necessary here. We then show that respectively.

{\bf {Construction of velocity $u_0$ with large Sobolev norm.}}
We should point out that this construction is already shown in the author's previous work \cite{zy}, which concerns the large solutions of 3D incompressible Navier-Stokes equations. For completeness, we shall give the details here.
First, let $f(x)$ be a smooth function in $\mathbb{R}^3$ with $\|f\|_{L^2} = M_0 < \infty$.
And assume that the support of $\hat f$ is restricted on a unit ball , i.e., $supp \; \hat f \subseteq B_1(0)$. Now we set,
\begin{equation}\nonumber
\hat v_{\lambda} (\xi)= \frac{i \xi \times \hat f(\xi - \lambda e_3)}{|\xi|}.
\end{equation}
Here $\lambda >1 $ is some positive parameter. Now, let's derive the Sobolev norm of $v_\lambda$. By direct calculation we have,
\begin{equation}\nonumber
\begin{split}
\|v_\lambda\|_{\dot H^{\frac{1}{2}}} = &\Big(\int_{\mathbb{R}^3} |\xi||\hat v_\lambda(\xi)|^2 \; d\xi\Big)^\frac{1}{2}
=  \Big(\int_{\mathbb{R}^3} \frac{|\xi \times \hat f(\xi - \lambda e_3)|^2}{|\xi|} \; d\xi\Big)^\frac{1}{2} \\
= & \Big(\int_{|\xi| \leq 1 } \frac{|(\xi + \lambda e_3) \times \hat f(\xi)|^2}{|\xi + \lambda e_3|} \; d\xi\Big)^\frac{1}{2}.
\end{split}
\end{equation}
It's obvious that $\|v_{\lambda}\|_{\dot H^\frac{1}{2}} \lesssim \lambda^\frac{1}{2}\|\hat f\|_{L^2}$. Also, we can achieve the lower bound control in the following,
\begin{equation}\label{low}
\begin{split}
\|v_\lambda\|_{\dot H^{\frac{1}{2}}} \gtrsim \;& \Big(\int_{|\xi| \leq 1 } \frac{\big | |\lambda e_3 \times \hat f(\xi)| - |\xi \times \hat f(\xi)| \big | ^2}{\lambda} \; d\xi\Big)^\frac{1}{2} \\
\gtrsim \;&  \Big(\int_{|\xi| \leq 1 } \Big |\lambda^{\frac{1}{2}} \big( |\hat f_1(\xi)|^2 +| \hat f_2(\xi)|^2 \big)^{\frac{1}{2}}- \lambda^{-\frac{1}{2}}|\xi \times \hat f(\xi)| \Big | ^2 \; d\xi\Big)^\frac{1}{2}.
\end{split}
\end{equation}
  It's easy to know that  $supp \; \hat v_{\lambda} \subseteq \Omega_{\theta_\lambda, e_3}$ with $\theta_{\lambda} = \arcsin{\frac{1}{\lambda}}$.
Therefore, if $\lambda$ is chosen large enough we shall get $\theta_\lambda \|v_{\lambda}\|_{\dot H^\frac{1}{2}} \ll 1$.
Moreover we can choose suitable $f$ ($f=(f_1,f_2,f_3)$ and $f_1\cdot f_2 \neq 0$ ), then the lower bound of $\|v_\lambda\|_{\dot H^{\frac{1}{2}}}$ \eqref{low} will be absolutely large (as $\lambda\rightarrow \infty$).

 We now construct the initial data for our system $u_0 = Re(v_{\lambda_0})$ (the real part of $v_{\lambda_0}$) and $\lambda_0$ is large enough, it then satisfies the assumption of Theorem \ref{t1.2}. Also  $\|u_0\|_{H^2}$  can be arbitrarily large since it's larger than $\|u_0\|_{\dot H^{\frac{1}{2}}}$.

{\bf {Construction of deformation tensor $E_0$ with large Sobolev norm}}. We shall choose  $E_0$ through the following system,
\begin{equation}\nonumber
\begin{cases}
U_t + u_0 \cdot \nabla U = \nabla u_0 U + \nabla u_0, \\
det(U + I) = 1, \\
U|_{t = 0} = 0,
\end{cases}
\end{equation}
here $u_0$ is some divergence free vector with cone-condition, i.e., $ supp \; \hat{u}_0 \subseteq \Omega_{\theta_0,\xi_0}$.
Notice the fact cone-condition will be preserved under multiplication operation, i.e., $supp \; \hat f, supp \; \hat g \subseteq \Omega_{\theta_0, \xi_0} \Longrightarrow supp  \widehat{f\cdot g} \subseteq \Omega_{\theta_0, \xi_0}$,  through Lemma \ref{lem1} we known that the solution $U(t, \cdot)$  satisfies the ``div-curl'' condition and cone-condition.
If we choose  $u_0$ with large Sobolev norm, the $\|U(T,\cdot)\|_{H^2}$ can also be large for some suitable $T$.
Then we can set $E_0(x) = U(T,x)$ which completes the construction.

\subsection{The Key of Proof}

In this subsection, we shall give a brief sketch to the key idea of proof.
As we have known, the lack of damping in the transport equation of $F$ causes the main  difficulty when deriving the global solutions especially for the large solutions.
Hence, there is few result concerning the global smooth solutions to system \eqref{e1.1} with large initial data.

In this paper, we use a brand new method to construct the large solutions satisfying ``cone-condition''.
And this cone-structure is well compatible with the nonlinear terms of system.
More precisely, the cone-structure and ``div-curl'' structure live in harmonious coexistence. They will be preserved together with the regularity of solution from the initial data.
The core of cone-structure is that under such structure one can move derivatives in nonlinear terms freely to an error term as needed.
And such an error term always contains one angle quantity related to the ``cone-condition''.
For more technical details, we refer to the next section.

\section{Energy Estimate}

Before deriving the energy estimate for Theorem \ref{t1.2}, we shall point out the assumptions for the initial data are natural and compatible with the system. We
give the correlative lemma in the following.

\begin{lemma}\label{lem1}
Assume that the assumptions for initial data \eqref{det}, \eqref{div}, \eqref{curl}, \eqref{cone-condition} are satisfied and $(u,F)$ is the solution of Cauchy problem \eqref{e1.1} with Hookean elasticity. Then the following equalities are always true,
\begin{equation}\nonumber
\begin{split}
\det(I+E)=&\;1, \\
\nabla \cdot E^T =&\;0,\\
\nabla_k E_{ij}-\nabla_j E_{ik}=&\;E_{lj} \nabla_l E_{ik}-E_{lk}\nabla_l E_{ij},\\
supp \; \hat{u} , supp \; \hat{E} \subseteq & \; \Omega_{\theta_0,\xi_0},
\end{split}
\end{equation}
for all time $t \geq 0 $.
\end{lemma}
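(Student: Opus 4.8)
The plan is to show each of the four properties is \emph{propagated} in time, assuming it holds at $t=0$. The first three (the determinant constraint $\det(I+E)=1$, the ``div'' structure $\nabla\cdot E^T=0$, and the ``curl'' structure) are the content of the classical results of Lei--Liu--Zhou \cite{llzhou} and Chen--Zhang \cite{cz}, so I would recall those arguments briefly: each identity satisfies a linear transport-type equation along the flow of $u$ (using $\nabla\cdot u=0$ and the equation $E_t+u\cdot\nabla E=\nabla u\,E+\nabla u$), whose right-hand side vanishes once the identity holds, so by uniqueness of the transported quantity it remains zero. For instance, applying $\nabla\cdot$ in the last two slots to the $E$-equation and commuting derivatives shows that $w:=\nabla\cdot E^T$ obeys $w_t+u\cdot\nabla w=(\nabla u)w$-type relation modulo terms that themselves vanish when the curl identity holds, so $w\equiv 0$ is preserved; similarly $\frac{d}{dt}\det(I+E)=0$ along particle trajectories because $\nabla\cdot u=0$.

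The genuinely new point is the last line: that the \textbf{cone-condition} $\mathrm{supp}\,\hat u,\ \mathrm{supp}\,\hat E\subseteq\Omega_{\theta_0,\xi_0}$ is preserved for all $t\ge 0$. The key observation is that the cone $\Omega_{\theta_0,\xi_0}$ is a \emph{closed convex cone}, hence closed under addition: if $\mathrm{supp}\,\hat f,\mathrm{supp}\,\hat g\subseteq\Omega_{\theta_0,\xi_0}$ then $\mathrm{supp}\,\widehat{fg}\subseteq\mathrm{supp}\,\hat f+\mathrm{supp}\,\hat g\subseteq\Omega_{\theta_0,\xi_0}$ (the Fourier transform of a product being the convolution), and likewise for any spatial derivative $\partial_j$, for the Leray projector $\mathbb{P}$ and for $(-\Delta)^{-1}$, all of which are Fourier multipliers that do not enlarge the spectral support. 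Thus every term appearing on the right-hand sides of the $(u,E)$ system — $u\cdot\nabla u$, $\nabla\cdot(EE^T)$, $\nabla\cdot E$, $\nabla\cdot E^T$, $u\cdot\nabla E$, $\nabla u\,E$, $\nabla u$, and the pressure term obtained by solving the elliptic equation for $P$ — has its Fourier support inside $\Omega_{\theta_0,\xi_0}$ whenever $u(t,\cdot),E(t,\cdot)$ do.

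Concretely I would argue via the Picard/contraction-mapping iteration used to build the local smooth solution: define $(u^{(0)},E^{(0)})=(u_0,E_0)$ and let $(u^{(n+1)},E^{(n+1)})$ solve the linearized system with coefficients $(u^{(n)},E^{(n)})$. By induction, if $\mathrm{supp}\,\widehat{u^{(n)}},\mathrm{supp}\,\widehat{E^{(n)}}\subseteq\Omega_{\theta_0,\xi_0}$, then since the heat semigroup $e^{\mu t\Delta}$ and the transport solution operator are again Fourier multipliers (or limits of such), together with the product/derivative/projection closure above, the Duhamel formula gives $\mathrm{supp}\,\widehat{u^{(n+1)}},\mathrm{supp}\,\widehat{E^{(n+1)}}\subseteq\Omega_{\theta_0,\xi_0}$; passing to the limit (spectral support in a closed set is preserved under the strong convergence already established in the existence proof) yields the claim for $(u,E)$. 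The only subtlety — and the step I expect to require the most care — is the transport part: $u\cdot\nabla E$ has a coefficient $u$ that is itself only spectrally supported in the cone, and one must check that the solution operator of $\partial_t g+u\cdot\nabla g=h$ maps cone-supported data and forcing to cone-supported output. This follows by writing that solution again through its own Duhamel/iteration expansion in which every nested term is a product of cone-supported factors hit by derivatives, but it should be spelled out rather than taken for granted. With all four properties propagated, the lemma follows.
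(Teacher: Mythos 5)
Your proposal is correct and follows essentially the same route as the paper: the first three identities are propagated by the transport/compatibility argument of Lei–Liu–Zhou (which the paper simply cites), and the cone condition is propagated because $\Omega_{\theta_0,\xi_0}$ with $\theta_0\le\pi/2$ is a closed convex cone, so spectral support is preserved under products, derivatives, and Fourier-multiplier operators, exactly the ``multiplication preserves the cone'' observation the paper invokes. You supply more scaffolding than the paper (the Picard iteration, the Duhamel/limit passage, and the caveat about the variable-coefficient transport term), which the paper leaves implicit but which is indeed where the care must be taken.
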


\begin{proof}
The front three equalities can be understood as the consistency condition for changing of
variables between the Lagrangian and Eulerian coordinates. The proof
can be found in \cite{llzhou}. For the last property in this lemma we notice the fact: the cone-condition will be preserved  under multiplication operation.
\end{proof}

At the beginning of proof, we shall introduce the definition of energy.
Without loss of generality, we set $\mu = 1$ all through the paper. If $(u,E)$ is the solution to system \eqref{new} with the initial data assumptions in Theorem \ref{t1.2},
for any positive time $t > 0$ we set,

\begin{equation}\label{energy1}
\mathcal{E}_0(t) \doteq \sup_{0\leq \tau \leq t } \big( \theta_0^2 \| u\|_{H^2}^2 + \theta_0^2 \| E\|_{H^2}^2 \big) +\int_0^t \theta_0^2 \| \nabla u\|_{H^2}^2 d\tau ,
\end{equation}
and,
\begin{equation}\label{energy2}
\mathcal{E}_1(t)\doteq \int_0^t \theta_0^2 \| \nabla E\|_{H^1}^2 d\tau .
\end{equation}
In which $\theta_0$ is the coefficient given by the cone-condition of initial data \eqref{cone-condition}.
Moreover, we can define the whole energy as the sum of $\mathcal{E}_0(t)$ and $\mathcal{E}_1(t)$,

\begin{equation}\label{energy3}
\mathcal{E}_{total}(t) \doteq \mathcal{E}_0(t) + \mathcal{E}_1(t).
\end{equation}

In the next, we shall first give the \textit{a prior} estimate for $\mathcal{E}_0(t)$.
And for $\mathcal{E}_1(t)$, we can not obtain the \textit{a prior} estimate for it directly.
As a workaround, we turn to give the time integral estimate for $\| \nabla \cdot E \|_{H^1}$ instead of $\| \nabla  E \|_{H^1}$.
Somehow, it will imply the corresponding control for $\| \nabla  E \|_{H^1}$ in this problem.
For more details, we refer to Lemma \ref{lem2} in the next section.
~\\~

\textbf{Step 1: The estimate for $\mathcal{E}_0(t)$ }

~\\
As the first step, let's focus on the estimate for $\mathcal{E}_0(t)$.
The $L^2$ energy estimate for $\mathcal{E}_0(t)$ is trivial.
Taking $L^2$ inner product with $u$ for the first equation in \eqref{new}, taking $L^2$ inner product with $E$ for the second equation in \eqref{new}.
Summing them up and  adding the coefficient $\theta_0^2$ we shall obtain,
\begin{equation}\label{l2}
\frac{1}{2} \Big( \theta_0^2 \|u\|_{L^2}^2 + \theta_0^2 \|E\|_{L^2}^2 \Big) + \int_0^t \theta_0^2 \|\nabla u\|_{L^2}^2 d\tau \leq \frac{c_0^2}{2} .
\end{equation}

To control the highest order terms in $\mathcal{E}_0(t)$, we now apply $\nabla^2$ derivative to system \eqref{new}, take $L^2$ inner product with $\nabla^2 u$ and $\nabla^2 E$ respectively.
Summing them up and adding $\theta_0^2$ there is,

\begin{equation}\label{H2}
 \frac{1}{2} \frac{d}{dt} \Big( \theta_0^2 \|u\|_{\dot{H}^2}^2 + \theta_0^2 \|E\|_{\dot{H}^2}^2 \Big) +\theta_0^2 \|\nabla u\|_{\dot{H}^2}^2 = \sum_{i=1}^6 I_i ,
\end{equation}
where,
\begin{equation*}
\begin{split}
I_1 =&  \;-\theta_0^2 \big< \nabla^2 ( u\cdot\nabla u), \nabla^2 u \big>_{L^2},
\quad I_2 =\theta_0^2 \big< \nabla^2 \nabla\cdot( EE^T), \nabla^2 u \big>_{L^2}, \\
I_3 = & \;\theta_0^2 \big< \nabla^2 (\nabla\cdot E), \nabla^2 u \big>_{L^2},
\qquad \; I_4 =  -\theta_0^2 \big< \nabla^2 ( u\cdot\nabla E), \nabla^2 E \big>_{L^2}, \\
I_5 = & \; \theta_0^2 \big< \nabla^2 ( \nabla u E), \nabla^2 E \big>_{L^2},
\qquad I_6 =  \theta_0^2 \big< \nabla^2 \nabla u ,\nabla^2 E \big>_{L^2}.
\end{split}
\end{equation*}
Here we use the notation $\big<f_1\;,\;f_2\big>_{L^2}$ to represent the $L^2$ inner product of $f_1$ and $f_2$ on $\mathbb{R}^3$.
We then give the \textit{a prior} estimate for $I_1\sim I_6$ one by one.

Firstly, we would like to deal with the wildest and most difficult term $I_2$.
As mentioned before, the lack of  damping in the transport equation of $F$ causes the main  difficulty when deriving the global solutions especially for the large solutions.
Therefore, the nonlinear term containing $\nabla\cdot( EE^T)$ behaves wildly.
We can write this part into:

$$ [\nabla\cdot (EE^T)]_i= \nabla_j (EE^T)_{ij} = \nabla_j (E_{ik}E_{jk} )= \nabla_jE_{ik}E_{jk}+ E_{ik} \nabla_jE_{jk}. $$
Here and throughout the text, we omit the summation sign $\sum$ for repeated indicators.
Notice the condition \eqref{div} and Lemma \ref{lem1}, we have $\nabla_jE_{jk}=0$.
The second part on the right-hand side above is equal to zero then. For the first part, we can investigate its Fourier transform and write,

\begin{equation}\label{fourier}
\widehat{\nabla_jE_{ik}E_{jk}} = \vec{i} \int_{\mathbb{R}^3} \hat{E}_{jk}(\xi-\eta)\eta_j \hat{E}_{ik}(\eta) \; d\eta.
\end{equation}
Here we  split vector $\eta$ into two parts,
\begin{equation}\label{eta}
\begin{split}
\eta = \;&\frac{\eta \cdot (\xi-\eta)}{|\xi-\eta|} \frac{(\xi-\eta)}{|\xi-\eta|} + \Big( \eta-\frac{\eta \cdot (\xi-\eta)}{|\xi-\eta|} \frac{(\xi-\eta)}{|\xi-\eta|} \Big) \\
\doteq \;&\vec{a} +\vec{b}.
\end{split}
\end{equation}
Return back to \eqref{fourier}, it then yields:
\begin{equation}\label{E1}
 \widehat{\nabla_jE_{ik}E_{jk}} = \vec{i} \int_{\mathbb{R}^3} \vec{a_j}\hat{E}_{jk}(\xi-\eta) \hat{E}_{ik}(\eta) \; d\eta +\vec{i} \int_{\mathbb{R}^3} \vec{b_j}\hat{E}_{jk}(\xi-\eta) \hat{E}_{ik}(\eta) \; d\eta.
 \end{equation}
 Notice here $\partial_j E_{jk}=0$, it's obvious  that $\vec{a_j}\hat{E}_{jk}(\xi-\eta) =0$. Then the first part on the right-hand side of \eqref{E1} vanishes which implies,
 $$  \widehat{\nabla_jE_{ik}E_{jk}} = \vec{i} \int_{\mathbb{R}^3} \vec{b_j}\hat{E}_{jk}(\xi-\eta) \hat{E}_{ik}(\eta) \; d\eta . $$
A direct computations now yields,

\begin{equation}\label{bestimate}
\begin{split}
|b|= &\sqrt{|\eta|^2 + \frac{|\eta|^2|\xi-\eta|^2\cos^2 \theta |\xi-\eta|^2}{|\xi-\eta|^4}-2\frac{\eta\cdot(\xi-\eta)\eta\cdot(\xi-\eta)}{|\xi-\eta|^2}}\\
=& \sqrt{|\eta|^2 -|\eta|^2\cos^2\theta} \\
=& |\eta|\sin \theta,
\end{split}
\end{equation}
where $\theta$ is the included angle of vector $\eta$ and $\xi$. 
We can  now derive the bound control for \eqref{E1} in the following,
\begin{equation}
\begin{split}
  \big|\widehat{\nabla_jE_{ik}E_{jk}}\big| \leq  & \Big| \int_{\mathbb{R}^3} \vec{b_j}\hat{E}_{jk}(\xi-\eta) \hat{E}_{ik}(\eta) \; d\eta \Big| \\
  \leq & \int_{\mathbb{R}^3} |b|  \big|\hat{E}_{jk}(\xi-\eta)\big| \big| \hat{E}_{ik}(\eta) \big| \; d\eta .
\end{split}
\end{equation}
Notice the estimate for $|b|$, namely \eqref{bestimate}, it's natural to get
\begin{equation}\label{E2}
\begin{split}
\big|\widehat{\nabla_jE_{ik}E_{jk}}\big| \leq  \sum_j ( |\widehat{E_{jk}}| *   |\widehat{ \nabla E_{ik}}| )\sin \theta  \\
\leq \sum_j ( |\widehat{E_{jk}}|*|\widehat{\nabla E_{ik}}| ) \theta_0.
\end{split}
\end{equation}
Here we have used the cone-condition of initial data \eqref{cone-condition} and we use symbol $*$ to denote the convolution of functions. 

Now, let's give the estimate for $I_2$.
\begin{equation*}
\begin{split}
& \;\theta_0^2\big< \nabla^2 \nabla\cdot( EE^T), \nabla^2 u \big>_{L^2} \\
= & -\theta_0^2 \int_{\mathbb{R}^3} \nabla ( \nabla_j E_{ik}E_{jk}) \nabla^3 u_i \; dx \\
\leq & \; \theta_0^2 \| \nabla (\nabla_j E_{ik}E_{jk})\|_{L^2} \| \nabla^3 u \|_{L^2} \\
= & \;\theta_0^2 \big\| |\xi| \widehat{\nabla_j E_{ik}E_{jk}} \big\|_{L^2} \big\| \nabla^3 u \big\|_{L^2},
\end{split}
\end{equation*}
here $f^{\vee}$ denotes the inverse Fourier transform of function $f$.
Combining  with the bound control of $ \widehat{\nabla_jE_{ik}E_{jk}}$, namely \eqref{E1} and \eqref{E2}, it then implies

\begin{equation*}
\begin{split}
&  \;\theta_0^2 \big< \nabla^2 \nabla\cdot( EE^T) , \nabla^2 u \big>_{L^2} \\
\leq &\; \sum_j \theta_0^3 \| |\xi|( |\widehat{E_{jk}}|*|\widehat{\nabla E_{ik}}| ) \|_{L^2} \| \nabla^3 u_i \|_{L^2} \\
\leq &\; \sum_j \theta_0^3 \| \nabla (|\widehat{E_{jk}}|^{\vee} |\widehat{\nabla E_{ik}}|^{\vee}) \|_{L^2} \| \nabla^3 u_i \|_{L^2} \\
\leq & \; \sum_j \theta_0^3 \Big( \big\| \nabla |\widehat{\nabla E_{ik}}|^{\vee} \big\|_{L^2} \big\| |\widehat{E_{jk}}|^{\vee}\big\|_{L^{\infty}} + \big\|  |\widehat{\nabla E_{ik}}|^{\vee} \big\|_{L^4}\big\|  \nabla |\widehat{ E_{jk}}|^{\vee} \big\|_{L^4} \Big) \big\| \nabla^3 u_i\big\|_{L^2}\\
\lesssim & \;\theta_0^3 \Big( \|\nabla^2 E\|_{L^2} \|E\|_{H^2} + \|\nabla E\|_{H^1}^2 \Big) \big\|\nabla^3 u\big\|_{L^2}.
\end{split}
\end{equation*}
Naturally, we can get the time integral estimate for $I_2$. Recall the definition of energies \eqref{energy1} and \eqref{energy2} there is

\begin{equation}\label{I2}
\int_0^t |I_2| \; d \tau \lesssim  \mathcal{E}_0(t)\mathcal{E}_1^{\frac{1}{2}}(t).
\end{equation}
$$\;$$

The estimate for the left terms will follow the similar method above.
For $I_1$, we can write $[u\cdot\nabla u]_i = u_k \nabla_k u_i$ and obtain,
$$ \widehat{u_k \nabla_k u_i} = \vec{i}\int_{\mathbb{R}^3} \hat{u}_{k}(\xi-\eta)\eta_k \hat{u}_{i}(\eta) \; d\eta.$$
We still split $\eta$ into $ \vec{a} + \vec{b}$ where $\vec{a}$ and $\vec{b}$ are  defined  in \eqref{eta}. Notice the divergence free condition of velocity filed $\nabla \cdot u=0$  we  can write $ i\eta\cdot \hat{u}(\eta)=0$. Namely, $\vec{a_k}\hat{u}_{k}(\xi-\eta) =0$. The bound control for $\widehat{u_k\nabla_k u_i}$ is similar to \eqref{E2}, i.e.,
$$ \big|\widehat{u_k\nabla_k u_i} \big| \leq \sum_k \theta_0 ( |\widehat{u_k}| * |\widehat{\nabla u_i}|).$$
We then derive that,
\begin{equation*}
\begin{split}
& \; -\theta_0^2 \big< \nabla^2 (u\cdot\nabla u) , \nabla^2 u \big>_{L^2} \\
= & \; \theta_0^2 \int_{\mathbb{R}^3} \nabla ( u_k\nabla_k u_i) \nabla^3 u_i \; dx \\
\leq & \; \theta_0^2 \| \nabla (u_k \nabla_k u_i)\|_{L^2} \| \nabla^3 u_i \|_{L^2} \\
= & \;\theta_0^2 \big\| |\xi| \widehat{u_k \nabla_k u_i} \big\|_{L^2} \| \nabla^3 u_i\|_{L^2}.
\end{split}
\end{equation*}
Which implies,
\begin{equation*}
\begin{split}
& \; \theta_0^2 \big< \nabla^2 (u\cdot\nabla u) ,\nabla^2 u \big>_{L^2} \\
\leq & \; \sum_k \theta_0^3 \Big( \big\| \nabla |\widehat{\nabla_ku_i}|^{\vee} \big\|_{L^2} \big\| |\widehat{u_k}|^{\vee}\big\|_{L^{\infty}} + \big\|  \nabla |\widehat{u_k}|^{\vee} \big\|_{L^4}\big\|  |\widehat{\nabla_ku_i}|^{\vee} \big\|_{L^4} \Big) \big\| \nabla^3 u_i \big\|_{L^2}\\
 \lesssim & \;\theta_0^3 \Big( \|\nabla^2 u\|_{L^2} \|u\|_{H^2} + \|\nabla u\|_{H^1}^2 \Big) \big\|\nabla^3 u\big\|_{L^2}.
\end{split}
\end{equation*}
As the conclusion, we can get the time integral estimate for $I_1$ in the following,

\begin{equation}\label{I1}
\int_0^t |I_1| \; d \tau \lesssim  \mathcal{E}_0^{\frac{3}{2}}(t).
\end{equation}
$$\;$$

Next, we shall handle $I_3$ and $I_6$ at the same time. Using integration by parts, a direct computations yields the following cancellation.

\begin{equation}\label{I3I6}
\begin{split}
  I_3+I_6 = & \; \theta_0^2 \big< \nabla^2 \nabla\cdot E, \nabla^2 u \big>_{L^2} +\theta_0^2 \int_{\mathbb{R}^3} \nabla^2 \nabla_j u_i \nabla^2 E_{ij} \; dx \\
  = & \; \theta_0^2 \big< \nabla^2 \nabla\cdot E, \nabla^2 u \big>_{L^2}  - \theta_0^2\int_{\mathbb{R}^3} \nabla^2  u_i \nabla^2 \nabla_j E_{ij} \; dx \\
  =& \;\theta_0^2\int_{\mathbb{R}^3} \nabla^2 (\nabla\cdot E)_i \nabla^2 u_i \; dx -\theta_0^2 \int_{\mathbb{R}^3} \nabla^2 u_i \nabla^2 (\nabla\cdot E)_i \; dx \\
  =&\; 0.
\end{split}
\end{equation}

Now, let's focus on the term $I_4$. Following the process before, we can easily write $ [u\cdot \nabla E ]_{ij}= u_k\nabla_k E_{ij}$ and there is,

\begin{equation}\label{E3}
 \widehat{u_k\nabla_k E_{ij}} = \vec{i} \int_{\mathbb{R}^3} \hat{u}_k(\xi-\eta) \eta_k\hat{E}_{ij}(\eta) \; d\eta .
 \end{equation}
Like before, we can derive
$$\big|\widehat{u_k\nabla_k E_{ij}} \big| \leq \sum_k \theta_0 ( |\widehat{u_k}| * |\widehat{\nabla E_{ij}}|).$$
Before the detailed computation, we shall point out one fact:  the highest order term of $E$ vanishes here,
 $$ \theta_0^2 \big< u\cdot\nabla^3E, \nabla^2 E \big>_{L^2} = \theta_0^2\int_{\mathbb{R}^3} u\cdot\nabla \Big(\frac{|\nabla^2E|^2}{2} \Big)\; dx = -\theta_0^2\int_{\mathbb{R}^3} \nabla\cdot u \; \frac{|\nabla^2E|^2}{2}  \; dx =0 .$$
Therefore, we directly know here

\begin{equation*}
\begin{split}
& \; -\theta_0^2 \big< \nabla^2 (u\cdot\nabla E) , \nabla^2 E \big>_{L^2} \\
= & \; -\theta_0^2 \int_{\mathbb{R}^3} \nabla^2  ( u_k\nabla_k E_{ij}) \nabla^2 E_{ij} \; dx \\
\leq & \; \theta_0^2 \big\| \nabla^2 (u_k\nabla_k E_{ij})\big\|_{L^2} \big\| \nabla^2 E_{ij}\big\|_{L^2} \\
= & \;\theta_0^2 \big\| |\xi|^2 \widehat{u_k\nabla_k E_{ij}} \big\|_{L^2} \big\| \nabla^2 E_{ij}\big\|_{L^2}.
\end{split}
\end{equation*}
Combining the three equations above we then get,

\begin{equation*}
\begin{split}
& \;  \theta_0^2 \big< \nabla^2 (u\cdot\nabla E), \nabla^2 E \big>_{L^2} \\
\leq & \;\sum_k \theta_0^3 \Big( \big\| \nabla^2 |\widehat{u_k}|^{\vee} \big\|_{L^4} \big\| |\widehat{\nabla E_{ij}}|^{\vee}\big\|_{L^4} + \big\| \nabla |\widehat{u_k}|^{\vee} \big\|_{L^{\infty}} \big\| \nabla |\widehat{\nabla E_{ij}}|^{\vee}\big\|_{L^2} \Big) \big\| \nabla^2 E\big\|_{L^2}\\
 \lesssim & \;\theta_0^3 \Big( \|\nabla^2 u\|_{H^1} \| \nabla E\|_{H^1} + \|\nabla^2 u\|_{L^2}^{\frac{1}{2}} \|\nabla^3 u\|_{L^2}^{\frac{1}{2}}   \| \nabla^2E\|_{L^2}             \Big) \big\|\nabla^2 E\big\|_{L^2}.
\end{split}
\end{equation*}
It implies the following time integral estimate for $I_4$,
\begin{equation}\label{I4}
\int_0^t |I_4| \; d \tau \lesssim  \mathcal{E}_0(t)\mathcal{E}_1^{\frac{1}{2}}(t).
\end{equation}
$$\;$$

Finally, we turn to deal with the last term $I_5$. Like before, we write $ [\nabla u E]_{ik} = \nabla_j u_i E_{jk}$ and obtain that
$$\big|\widehat{\nabla_j u_i E_{jk}} \big| \leq \sum_j \theta_0 ( |\widehat{\nabla u_i }| *  |\widehat{E_{jk}}|),$$
by cone-condition. Using these known results, we directly derive that,

\begin{equation*}
\begin{split}
& \; \theta_0^2 \big<\nabla^2 (\nabla u E), \nabla^2 E \big>_{L^2} \\
= & \; \theta_0^2 \int_{\mathbb{R}^3} \nabla^2  ( \nabla_j u_i E_{jk}) \nabla^2 E_{ik} \; dx \\
\leq & \; \theta_0^2 \| \nabla^2 (\nabla_j u_i E_{jk})\|_{L^2} \| \nabla^2 E_{ik}\|_{L^2} \\
= & \;\theta_0^2 \big\| |\xi|^2 \widehat{\nabla_j u_i E_{jk}} \big\|_{L^2} \big\| \nabla^2 E_{ik}\big\|_{L^2}.
\end{split}
\end{equation*}
Indeed,
\begin{equation*}
\begin{split}
& \; \theta_0^2 \big<\nabla^2 (\nabla u E), \nabla^2 E \big>_{L^2} \\
\leq & \; \sum_j \theta_0^3 \Big( \big\| \nabla^2 |\widehat{\nabla  u_i}|^{\vee} \big\|_{L^2} \big\| |\widehat{E_{jk}}|^{\vee}\big\|_{L^{\infty}} + \big\| \nabla |\widehat{\nabla u_i}|^{\vee} \big\|_{L^4} \big\| \nabla |\widehat{E_{jk}}|^{\vee}\big\|_{L^4} \\
& + \big\| |\widehat{\nabla u_i}|^{\vee} \big\|_{L^{\infty}} \big\| \nabla^2|\widehat{E_{jk}}|^{\vee}\big\|_{L^2}\Big) \big\| \nabla^2 E_{ik}\big\|_{L^2}\\
 \lesssim & \;\theta_0^3 \Big( \|\nabla^3 u\|_{L^2} \|\nabla E\|_{L^2}^{\frac{1}{2}} \|\nabla^2 E\|_{L^2}^{\frac{1}{2}}+ \|\nabla^2 u\|_{H^1}\|\nabla E\|_{H^1}  \\
& + \|\nabla^2 u\|_{L^2}^{\frac{1}{2}} \|\nabla^3 u\|_{L^2}^{\frac{1}{2}}  \|\nabla^2 E\|_{L^2}  \Big) \big\|\nabla^2 E\big\|_{L^2}.
\end{split}
\end{equation*}
 It then implies the time integral control for $I_5$,

\begin{equation}\label{I5}
\int_0^t |I_5| \; d \tau \lesssim \mathcal{E}_0(t)\mathcal{E}_1^{\frac{1}{2}}(t).
\end{equation}

Now, we can give the estimate for $\mathcal{E}_0(t) $. Integral \eqref{H2} with time from $0\sim t$ and take all the estimates above into consideration. Namely, \eqref{I1}, \eqref{I2}, \eqref{I3I6}, \eqref{I4} and \eqref{I5}. We then obtain the following inequality,

\begin{equation}\nonumber
\begin{split}
& \frac{1}{2} \Big( \theta_0^2 \|u\|_{\dot{H}^2}^2 + \theta_0^2 \|E\|_{\dot{H}^2}^2 \Big) + \int_0^t \theta_0^2 \|\nabla u\|_{\dot{H}^2}^2 d\tau \\
 \lesssim & \; \mathcal{E}_0(0) +\mathcal{E}_0(t)\mathcal{E}_1^{\frac{1}{2}}(t).
\end{split}
\end{equation}
Combining this with \eqref{l2} and using Young's inequality, it then implies

\begin{equation}\label{l4}
\mathcal{E}_0(t) \lesssim \mathcal{E}_0(0)+\mathcal{E}_{total}^{\frac{3}{2}}(t)
\end{equation}
We recall here  $\mathcal{E}_{total}(t) \doteq \mathcal{E}_0(t)+\mathcal{E}_1(t)$ as defined in \eqref{energy3}.
 This completes the proof for $\mathcal{E}_0(t)$.
~\\~

As mentioned before, it's then natural to move on to the \textit{a prior} estimate for $ \| \nabla\cdot E \|_{H^1}$.
~\\~

\textbf{Step 2: The $H^1$ norm estimate for $\nabla \cdot E$ }
~\\

From the first equation of system \eqref{new} and the ``div'' structure assumption \eqref{div}, we have

\begin{equation*}
 \nabla\cdot E = u_t+ u\cdot\nabla u +\nabla P -\Delta u -\nabla\cdot (EE^T).
\end{equation*}
Taking $H^1$ inner product with $\nabla\cdot E$ on $\mathbb{R}^3$ and adding the coefficient $\theta_0^2$, it yields that

\begin{equation}\label{E}
\begin{split}
  & \;\theta_0^2 \| \nabla \cdot E\|_{H^1}^2 \\
  = & \;\theta_0^2 \big< \nabla \cdot E\; , \;u_t+ u\cdot\nabla u +\nabla P -\Delta u -\nabla\cdot (EE^T) \big>_{H^1} \\
   = &\; \theta_0^2 \frac{d}{dt} \big< u \;, \;\nabla\cdot E \big>_{H^1}  + \theta_0^2 \big<\nabla u\; ,  \;-u\cdot\nabla E + \nabla u E + \nabla u \big>_{H^1}  \\
  & + \theta_0^2 \big< \nabla \cdot E\; , \;u\cdot\nabla u +\nabla P -\Delta u -\nabla\cdot (EE^T) \big>_{H^1}\\
  =&\; \sum_{i=1}^8 J_i .
\end{split}
\end{equation}
Of course we should give the estimate for $J_1\sim J_8$ respectively. The time integral estimate for the first term $J_1$ is trivial.
\begin{equation}\label{J1}
\int_0^t |J_1| \; d\tau \leq \mathcal{E}_0(t).
\end{equation}
$$\;$$

For the next term $J_2$, we can follow the similar process as in $I_4$.
There is,

\begin{equation*}
\begin{split}
& \;\theta_0^2 \big< \nabla u \;,\; u\cdot\nabla E \big>_{H^1} \\
= & \;\theta_0^2 \int_{\mathbb{R}^3}  \nabla_j u_i(u_k\nabla_k E_{ij})           \; dx \\
\leq & \; \theta_0^2 \| \nabla_j u_i \|_{H^1} \| u_k\nabla_k E_{ij} \|_{H^1}  \\
 \lesssim  & \; \theta_0^3 \| \nabla u\|_{H^1}\|\nabla E\|_{H^1} \|\nabla u\|_{H^2}.
\end{split}
\end{equation*}
It implies,
\begin{equation}\label{J2}
\int_0^t |J_2| \; d\tau \lesssim \mathcal{E}_0^{\frac{3}{2}}(t).
\end{equation}
$$\;$$

The estimate for $J_3$ is similar to $I_5$ and there is,

\begin{equation*}
\begin{split}
\theta_0^2 \big< \nabla u \; , \; \nabla u E \big>_{H^1}
\leq & \; \theta_0^2 \| \nabla_j u_i \|_{H^1} \| \nabla_j u_i E_{jk} \|_{H^1}\\
\lesssim  &\; \theta_0^3 \| \nabla u\|_{H^1}\| E\|_{H^1} \|\nabla u\|_{H^2}.
\end{split}
\end{equation*}
It implies,
\begin{equation}\label{J3}
\int_0^t |J_3| \; d\tau \lesssim \mathcal{E}_0^{\frac{3}{2}}(t).
\end{equation}
$$\;$$

The estimate for linear term $J_4$ is directly shown by

\begin{equation}\label{J4}
\int_0^t |J_4| \; d\tau = \int_0^t \theta_0^2 \| \nabla u\|_{H^1}^2 \; d\tau\leq \mathcal{E}_0(t).
\end{equation}
$$\;$$

Next, we turn to deal with $J_5$. It can be handled like $I_1$. We derive,

\begin{equation*}
\begin{split}
\theta_0^2 \big< \nabla\cdot E \;,\; u\cdot\nabla u \big>_{H^1}
\leq & \; \theta_0^2 \| \nabla \cdot E_i\|_{H^1} \| u_k\nabla_k u_i  \|_{H^1} \\
\lesssim & \;  \theta_0^3 \|u\|_{H^2}\| \nabla u \|_{H^1} \|\nabla E\|_{H^1}.
\end{split}
\end{equation*}
And the time integral estimate is given by
\begin{equation}\label{J5}
\int_0^t |J_5| \; d\tau \lesssim \mathcal{E}_0(t)\mathcal{E}_1^{\frac{1}{2}}(t).
\end{equation}
$$\;$$

Using the ``div'' condition we shall derive $\nabla_i E_{ij}=0$. Therefore, it's obvious that
\begin{equation}\label{J6}
\begin{split}
  J_6 =& \;\theta_0^2 \big< \nabla\cdot E \;,\; \nabla P \big>_{H^1} = \theta_0^2 \sum_{k = 0}^1\int_{\mathbb{R}^3} \nabla^k \nabla_j E_{ij} \nabla^k \nabla_i P \; dx \\
  =& -\theta_0^2 \sum_{k = 0}^1 \int_{\mathbb{R}^3} \nabla^k \nabla_j \nabla_i E_{ij} \; \nabla^k P \; dx = 0.
\end{split}
\end{equation}
Also, the estimate for $J_7$ is trivial,

\begin{equation}\label{J7}
\int_0^t |J_7| \; d\tau \leq  \int_0^t \theta_0^2 \| \nabla E\|_{H^1} \| \nabla^2 u\|_{H^1} \; d\tau\leq \mathcal{E}_0^{\frac{1}{2}}(t)\mathcal{E}_1^{\frac{1}{2}}(t).
\end{equation}
$$\;$$

Finally we turn to handle the last term $J_8$. It can be dealt with the similar method like $I_2$.
We directly know that
\begin{equation*}
\begin{split}
\theta_0^2 \big< \nabla\cdot E  \;,\; \nabla\cdot (EE^T) \big>_{H^1}
\leq & \; \theta_0^2 \| \nabla \cdot E_{i,\cdot}\|_{H^1} \| \nabla_j E_{ik}E_{jk}  \|_{H^1}\\
\lesssim &\; \theta_0^3 \|E\|_{H^2}  \|\nabla E\|_{H^1}^2.
\end{split}
\end{equation*}
It implies,
\begin{equation}\label{J8}
\int_0^t |J_8| \; d\tau \lesssim \mathcal{E}_0^{\frac{1}{2}}(t)\mathcal{E}_1(t).
\end{equation}
$$\;$$

By this line, we have obtained the time integral estimate for $J_1\sim J_8$.
Namely, \eqref{J1}, \eqref{J2}, \eqref{J3}, \eqref{J4}, \eqref{J5}, \eqref{J6}, \eqref{J7} and \eqref{J8}.
Now, integral \eqref{E} with time from $0\sim t$ and take all the estimates above into consideration. Using Young's inequality we shall get

\begin{equation}\label{DIVE}
\int_0^t \theta_0^2 \|\nabla \cdot E \|_{H^1}^2 d\tau \lesssim \mathcal{E}_0(t) +\mathcal{E}_{total}^{\frac{3}{2}}(t).
\end{equation}
This completes the proof for the $H^1$ norm of $\nabla \cdot E$.

\section{Proof of Theorem \ref{t1.2}}

In Section 2, we have obtained the \textit{a priori} estimate for $\mathcal{E}_0(t)$ and $\nabla\cdot E$. However, what we need in $\mathcal{E}_1(t)$ is the time integral estimate for $\nabla E$.
At the beginning of this section, we shall announce that the estimate for $\nabla \cdot E$ implies the control for $\nabla E$.
After that, we give the proof for Theorem \ref{t1.2}.
Firstly, we introduce the following lemma.

\begin{lemma}\label{lem2}
Assume that the assumptions \eqref{div} and \eqref{curl} are satisfied,  $(u,E)$ is the solution of Cauchy problem for \eqref{new}. Then the following inequality is always true:
\begin{equation*}
  \| \nabla E \|_{H^1} \lesssim \| \nabla \cdot E\|_{H^1} + (\theta_0 \|E\|_{H^2}) \|\nabla E\|_{H^1},
\end{equation*}
for time $t \geq 0 $. Hence, for small quantity $\theta_0 \|E\|_{H^2} \ll 1$, it directly yields that
\begin{equation*}
\| \nabla E \|_{H^1} \lesssim \| \nabla \cdot E\|_{H^1}.
\end{equation*}
\end{lemma}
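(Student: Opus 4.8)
The plan is to view $E$ one row at a time as a vector field, apply the Hodge (div--curl) identity in Fourier variables, replace the curl by the quadratic right-hand side of \eqref{curl}, and absorb that quadratic term using the divergence constraint \eqref{div} together with the cone-condition exactly as the nonlinear terms were treated in Section 2. Concretely, fix the first index $i$ and set $w^{(i)}=(E_{i1},E_{i2},E_{i3})$. Then $\nabla\cdot w^{(i)}=\nabla_jE_{ij}=(\nabla\cdot E)_i$, while the components of $\nabla\times w^{(i)}$ are $\nabla_kE_{ij}-\nabla_jE_{ik}$, which by \eqref{curl} (valid for all $t$ by Lemma \ref{lem1}) equal $E_{lj}\nabla_lE_{ik}-E_{lk}\nabla_lE_{ij}$. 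For a vector field $v$ one has the pointwise Fourier identity $|\xi|^2|\hat v(\xi)|^2=|\xi\cdot\hat v(\xi)|^2+|\xi\times\hat v(\xi)|^2$, hence $\|\nabla v\|_{L^2}^2=\|\nabla\cdot v\|_{L^2}^2+\|\nabla\times v\|_{L^2}^2$, and likewise after applying one more $\nabla$. Applying this to each $w^{(i)}$, summing over $i$ and taking square roots yields
\[
\|\nabla E\|_{H^1}\lesssim\|\nabla\cdot E\|_{H^1}+\mathcal{Q},
\qquad
\mathcal{Q}:=\sum_{i,j,k,l}\big\|E_{lj}\nabla_lE_{ik}\big\|_{H^1}.
\]

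\textbf{Controlling the quadratic term.} Each summand in $\mathcal{Q}$ has exactly the structure of the nonlinearity $\nabla_jE_{ik}E_{jk}$ met in $I_2$ and $J_8$: in $\widehat{E_{lj}\nabla_lE_{ik}}(\xi)=\vec{i}\int\hat E_{lj}(\xi-\eta)\eta_l\hat E_{ik}(\eta)\,d\eta$ we split $\eta=\vec{a}+\vec{b}$ as in \eqref{eta}; since $\nabla_lE_{lj}=0$ by \eqref{div} and Lemma \ref{lem1}, the part with $\vec{a}$ (parallel to $\xi-\eta$) vanishes, and the cone-condition \eqref{cone-condition} forces $|\vec{b}|\lesssim\theta_0|\eta|$ as in \eqref{bestimate}. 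The same applies to a derivative of $E_{lj}\nabla_lE_{ik}$, whether the extra derivative falls on the low-frequency factor $E_{lj}$ or on $\nabla_lE_{ik}$. Running the convolution/Sobolev estimates of $I_2$ verbatim gives
\[
\mathcal{Q}\lesssim\theta_0\big(\|\nabla^2E\|_{L^2}\|E\|_{H^2}+\|\nabla E\|_{H^1}^2\big)\lesssim\big(\theta_0\|E\|_{H^2}\big)\|\nabla E\|_{H^1}.
\]

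Combining the two displays gives $\|\nabla E\|_{H^1}\lesssim\|\nabla\cdot E\|_{H^1}+(\theta_0\|E\|_{H^2})\|\nabla E\|_{H^1}$, which is the first assertion. When $\theta_0\|E\|_{H^2}$ is small enough that the constant in front of the last term is at most $\tfrac12$, that term is absorbed into the left-hand side, yielding $\|\nabla E\|_{H^1}\lesssim\|\nabla\cdot E\|_{H^1}$.

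I expect the main obstacle to be index bookkeeping rather than analysis: one must arrange the rows so that \eqref{curl} is literally the curl of the vector field whose divergence is the desired quantity $(\nabla\cdot E)_i$, and so that the divergence that is actually available, $\nabla\cdot E^T=0$, i.e.\ $\nabla_lE_{lj}=0$, is precisely the one that annihilates the dangerous part of the quadratic term; and one must verify that the $\theta_0$ gain persists at top order regardless of which factor carries the extra derivative, which is again the Fourier-support argument behind \eqref{eta}.
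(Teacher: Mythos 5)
Your proposal is correct and follows essentially the same route as the paper: split $\nabla E$ into its divergence and curl parts (row by row), use the ``curl'' structure to replace $\nabla\times E$ by the quadratic expression $E_{lj}\nabla_lE_{ik}-E_{lk}\nabla_lE_{ij}$, and then gain the factor $\theta_0$ exactly as in the treatment of $I_2$ by using the constraint $\nabla_lE_{lj}=0$ to kill the part of $\eta$ parallel to $\xi-\eta$ and the cone-condition to bound the orthogonal remainder. The only cosmetic difference is that you make the Fourier-side Hodge identity $|\xi|^2|\hat v|^2=|\xi\cdot\hat v|^2+|\xi\times\hat v|^2$ explicit, whereas the paper simply invokes $\|\nabla E\|_{H^1}\le\|\nabla\cdot E\|_{H^1}+\|\nabla\times E\|_{H^1}$ directly; the mechanism and the final product estimate $\|\nabla\times E\|_{H^1}\lesssim(\theta_0\|E\|_{H^2})\|\nabla E\|_{H^1}$ are identical.
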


\begin{proof}

Notice the ``curl'' assumption \eqref{curl} in Theorem \ref{t1.2} and Lemma \ref{lem1}, we can derive that,
\begin{equation}\nonumber
\nabla_k E_{ij}-\nabla_j E_{ik}=E_{lj} \nabla_l E_{ik}-E_{lk}\nabla_l E_{ij}.
\end{equation}
Here we have used the definition for the curl of matrix $E$,
$$ [\nabla\times E]_{im}= [\nabla \times E_{i,\cdot}]_m = \delta_{mkj}\nabla_k E_{ij}.$$
It implies that

\begin{equation}\nonumber
\big|  \widehat{(\nabla\times E)_{im}} \big| \leq \sum_{j,k} \big(\big|  \widehat{E_{lj}\nabla_l E_{ik}} \big|+\big|  \widehat{E_{lk}\nabla_l E_{ij}} \big|\big).
\end{equation}
Following the process of \eqref{E2}, we have
\begin{equation}\nonumber
\begin{split}
\big|  \widehat{E_{lj}\nabla_l E_{ik}} \big| \lesssim& \theta_0 \sum_{l}|\hat{E}_{lj}| * |\widehat{\nabla E_{ik}}|, \\
\big|  \widehat{E_{lk}\nabla_l E_{ij}} \big| \lesssim& \theta_0 \sum_{l}|\hat{E}_{lk}| * |\widehat{\nabla E_{ij}}|,
\end{split}
\end{equation}
which implies,
$$ |\widehat{\nabla\times E}| \leq \theta_0 |\hat{E} | * |\widehat{\nabla E}|.$$
Here we use the symbol $\star$ to stand for convolution operation.
Therefore,
\begin{equation}\nonumber
\begin{split}
\| \nabla E\|_{H^1} \leq &\;\| \nabla \cdot E\|_{H^1}+\| \nabla \times E\|_{H^1} \\
\leq &\; \big\| \nabla \cdot E\big\|_{H^1} + \theta_0 \big\| |\hat{E}|^{\vee} |\widehat{\nabla E}|^{\vee} \big\|_{H^1} \\
\lesssim & \;\| \nabla \cdot E\|_{H^1} + (\theta_0 \| E\|_{H^2})\| \nabla E\|_{H^1}.
\end{split}
\end{equation}
It then completes the proof of this lemma.
\end{proof}

Now, let's give the proof for Theorem \ref{t1.2}.
Taking \eqref{DIVE} and Lemma \ref{lem2} into consideration, we then derive the \textit{a prior} estimate for $\mathcal{E}_1(t)$,

\begin{equation}\label{EEE}
\mathcal{E}_1(t)=  \int_0^t \theta_0^2 \|\nabla  E \|_{H^1}^2 d\tau \lesssim \mathcal{E}_0(t) +\mathcal{E}_{total}^{\frac{3}{2}}(t) + \mathcal{E}_0(t)\mathcal{E}_1.
\end{equation}

Multiplying \eqref{l4} and \eqref{EEE} by different suitable number, and summing them up, we can obtain the following inequality

\begin{equation}\label{total}
  \mathcal{E}_{total}(t) \leq C_1 \mathcal{E}_0(0) + C_1 \mathcal{E}_{total}^{\frac{3}{2}}(t) + C_1 \mathcal{E}_0(t)\mathcal{E}_1,
\end{equation}
here  $C_1$ is some positive constant.

According to the setting of initial data in Theorem \ref{t1.2}, there exists a positive number  $C_2$ such that $ \mathcal{E}_{total}(0)+C_1 \mathcal{E}_0(0) \leq C_2 c_0^2$. Due to the local existence theory which can be achieved through basic energy method, there exists a positive time $T$ such that

\begin{equation}\label{T}
  \mathcal{E}_{total}(t) \leq 2C_2 c_0^2, \qquad\qquad \forall t \in [0,T].
\end{equation}
Let $T^*$ be the largest possible time of $T$ for what \eqref{T} holds. Then we only need to show $T^* = \infty$. Notice the inequality \eqref{total}, we can use a standard continuation argument to show  the fact $T^* = \infty$ provided that $c_0$ is small enough. We omit some details here for convenience and  finish the proof of Theorem \ref{t1.2} then.

\section*{Acknowledgement}
 The  author is supported by Shanghai Sailing Program (18YF1405500), Fundamental Research Funds for the Central Universities (222201814026), China Postdoctoral Science Foundation (2018M630406, 2019T120308) and NSFC (11801175).

\end{document}